\def\@seccntDot{.}
\def\@seccntformat#1{\csname the#1\endcsname\@seccntDot\hskip 0.5em}
\renewcommand\section{\@startsection{section}{1}{\z@}%
{18\p@ \@plus 6\p@ \@minus 3\p@}%
{9\p@ \@plus 6\p@ \@minus 3\p@}%
{\large\bfseries\boldmath}}
\renewcommand\subsection{\@startsection{subsection}{2}{\z@}%
{12\p@ \@plus 6\p@ \@minus 3\p@}%
{3\p@ \@plus 6\p@ \@minus 3\p@}%
{\bfseries\boldmath}}
\renewcommand\subsubsection{\@startsection{subsubsection}{3}{\z@}%
{12\p@ \@plus 6\p@ \@minus 3\p@}%
{\p@}%
{\bfseries\boldmath}}
\theoremstyle{plain}
\newtheorem{theorem}{Theorem}[section]
\newtheorem{lemma}{Lemma}[section]
\newtheorem{conjecture}{Conjecture}[section]
\theoremstyle{definition}
\newtheorem{claim}{Claim}
\numberwithin{equation}{section}
\newcommand{\x}{{\bf x}}
\title{A proof of a conjecture on the distance spectral radius and maximum transmission of graphs}
\author{
Lele Liu\thanks{College of Science, University of Shanghai for Science and Technology, Shanghai 200093, China
(\texttt{ahhylau@outlook.com})}~,~~
Haiying Shan\thanks{\footnotesize School of Mathematical Sciences, Tongji University, Shanghai 200092, China
(\texttt{shan\_haiying@tongji.edu.cn})}~,~~
Changxiang He\thanks{College of Science, University of Shanghai for Science and Technology, Shanghai 200093, China
(\texttt{changxiang-he@163.com})}
}
\date{}
\begin{document}
\maketitle

\begin{abstract}
Let $G$ be a simple connected graph, and $D(G)$ be the distance matrix of $G$. Suppose that $D_{\max}(G)$ 
and $\lambda_1(G)$ are the maximum row sum and the spectral radius of $D(G)$, respectively. In this paper, 
we give a lower bound for $D_{\max}(G)-\lambda_1(G)$, and characterize the extremal graphs attaining the 
bound. As a corollary, we solve a conjecture posed by Liu, Shu and Xue.
\par\vspace{2mm}

\noindent{\bfseries Keywords:} Distance matrix; Distance spectral radius; Non-transmission-regular graph
\par\vspace{1mm}

\noindent{\bfseries AMS Classification:} 05C50
\end{abstract}

\section{Introduction}

In this paper, we consider only simple, undirected graphs, i.e, undirected graphs without multiple edges or loops. 
A graph $G$ is a pair $(V(G), E(G))$ consisting of a set $V(G)$ of vertices and a set $E(G)$ of edges. The 
{\em distance matrix} $D(G)=(d_{ij})$ of a connected graph $G$ is the $|V(G)|\times |V(G)|$ matrix indexed by 
the vertices of $G$, where $d_{ij}=d(v_i,v_j)$ and $d(v_i,v_j)$ denotes the distance between the vertices $v_i$ and 
$v_j$, i.e., the length of a shortest path between $v_i$ and $v_j$. The largest eigenvalue of $D(G)$, denoted 
by $\lambda_1(G)$, is called the {\em distance spectral radius} of $G$. For graph notation and terminology 
undefined here we refer the reader to \cite{BondyMurty2008}.

The {\em transmission} of a vertex $v$ in $G$, denoted by $D_v(G)$ or simply by $D_v$, is the sum of the distances 
from $v$ to all other vertices in $G$. We denote the maximum and minimum transmission of $G$ by $D_{\max}$ and $D_{\min}$, 
respectively. That is,
\[
D_{\max}:=\max\{D_v\,|\,v\in V(G)\},~ 
D_{\min}:=\min\{D_v\,|\,v\in V(G)\}.
\]
If there is no risk of confusion we shall denote $D_{\max}(G)$ and $D_{\min}(G)$ simply by $D_{\max}$ and $D_{\min}$, 
respectively. A connected graph $G$ is {\em transmission-regular} if $D_{\max}=D_{\min}$; otherwise, $G$ is 
{\em non-transmission-regular}. The {\em Wiener index} $W(G)$ of a graph $G$ is the sum of the distances between 
all unordered pairs of vertices of $G$. Alternatively, $W(G)$ is half the sum of all the entries of the distance 
matrix of $G$, that is,
\[
W(G)=\frac{1}{2}\sum_{v\in V(G)} D_v.
\]

In \cite{AtikPanigrahi2018}, Atik and Panigrahi introduced a class of graphs called {\em DVDR graphs}. A connected 
graph $G$ on $n$ vertices is said to be {\em distinguished vertex deleted regular graph} (DVDR) if there exist a 
vertex $v$ in $G$ such that the degree $d(v)=n-1$ and $G-v$ is an regular graph. The vertex $v$ is said to be a 
{\em distinguished vertex} of the DVDR graph $G$. If $G-v$ is $r$-regular, we say $G$ is an $r$-{\em DVDR} graph.  

In \cite{LiuShuXue2018}, Liu, Shu and Xue studied the bounds of $D_{\max}(G)-\lambda_1(G)$ for connected 
non-transmission-regular graphs, and posed the following conjecture.

\begin{conjecture}[\cite{LiuShuXue2018}]\label{conj:D-lambda}
Let $G$ be a connected non-transmission-regular graph with $n$ vertices. Then
\[
D_{\max}(G)-\lambda_1(G)>\frac{1}{n+1}.
\]
\end{conjecture}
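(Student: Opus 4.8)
The plan is to work directly with the Perron eigenvector of the distance matrix and to prove two complementary lower bounds on $D_{\max}(G)-\lambda_1(G)$: one that is strong when the eigenvector is spread out, and one that is strong when it is nearly constant. Balancing them will push the difference above $\frac{1}{n+1}$. Since $G$ is connected, $D=D(G)$ is a nonnegative, irreducible, symmetric matrix, so by Perron--Frobenius it admits a positive eigenvector $\mathbf{x}=(x_1,\dots,x_n)^{\top}$ with $D\mathbf{x}=\lambda_1\mathbf{x}$. I would normalize so that $\max_j x_j=1$, attained at a vertex $p$, and set $m=\min_j x_j$, $\sigma=\sum_j x_j$, and $t=n-\sigma=\sum_j(1-x_j)$. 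Because $G$ is non-transmission-regular, $\mathbf{1}$ is not an eigenvector of $D$, so $\mathbf{x}$ is non-constant; hence $m<1$ and $t>0$.

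The first estimate uses the row indexed by $p$. From $(D\mathbf{x})_p=\lambda_1$ and $D_p=\sum_j d_{pj}$ one gets $D_p-\lambda_1=\sum_j d_{pj}(1-x_j)\ge\sum_{j\ne p}(1-x_j)=t$, since $d_{pj}\ge 1$ for $j\ne p$ and $1-x_p=0$; as $D_{\max}\ge D_p$, this gives $D_{\max}-\lambda_1\ge t$. The second estimate comes from summing the eigen-equation: using the symmetry of $D$,
\[
\lambda_1\sigma=\sum_i(D\mathbf{x})_i=\sum_j x_j\sum_i d_{ij}=\sum_j D_j x_j,
\]
so that $D_{\max}-\lambda_1=\sigma^{-1}\sum_j(D_{\max}-D_j)x_j$. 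Every summand is nonnegative, and since transmissions are integers, non-transmission-regularity yields a vertex $j_0$ with $D_{\max}-D_{j_0}\ge 1$; its contribution is at least $x_{j_0}\ge m$, so $D_{\max}-\lambda_1\ge m/\sigma$.

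To combine the two, note $t=\sum_j(1-x_j)\ge 1-m$, hence $m\ge 1-t$; together with $\sigma=n-t$ this turns the second estimate into $D_{\max}-\lambda_1\ge\frac{1-t}{\,n-t\,}$ whenever $t\le 1$ (and for $t>1$ the first estimate already exceeds $1$). Thus, for admissible $t\in(0,1]$,
\[
D_{\max}-\lambda_1\ge\max\Bigl(t,\ \frac{1-t}{n-t}\Bigr).
\]
A brief analysis of the right-hand side finishes the proof: the function $\frac{1-t}{n-t}$ is decreasing, the two functions meet at the smaller root $t_\star$ of $t^2-(n+1)t+1=0$, and $t_\star>\frac{1}{n+1}$ because this quadratic is positive at $t=\frac{1}{n+1}$. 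Hence the maximum is at least $t_\star>\frac{1}{n+1}$ for every $t>0$, which is exactly the conjectured inequality.

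The conceptual heart, and the step I expect to be the main obstacle, is recognizing that no single inequality suffices: the quantity $t$ measures the difference well only when the Perron vector is far from constant, whereas the averaged identity takes over precisely when $t$ is small, and the constant $\frac{1}{n+1}$ emerges from the balance point of the two. A second essential ingredient is the integrality of the transmissions, which supplies the gap $D_{\max}-D_{j_0}\ge 1$; without it the averaged bound degrades and the threshold $\frac{1}{n+1}$ cannot be reached. For the sharp bound and extremal characterization promised in the abstract one would refine both estimates---tracking the actual distances $d_{pj}$ and the full distribution of the values $D_{\max}-D_j$---but for the conjecture these two crude bounds already suffice.
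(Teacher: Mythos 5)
Your proof is correct, and it takes a genuinely different route from the paper. The paper never attacks the conjecture directly: it fixes a hypothetical minimizer $G^*$ of $\sigma(G)=D_{\max}(G)-\lambda_1(G)$, compares it against explicit candidate graphs ($K_{1,2,\ldots,2}$ for odd $n$, $(n-4)$-DVDR graphs for even $n$) to get the a priori bound $\sigma(G^*)\le\sigma_n$, and then uses that bound to bootstrap five structural claims (on $nD_{\max}-2W$, the ratio $x_{\max}/x_{\min}$, the number of vertices of maximum transmission, etc.) that pin down $G^*$ completely, yielding sharp parity-dependent bounds with equality characterization. Your argument is elementary and uniform: you balance the row estimate $D_{\max}-\lambda_1\ge t$ (from the eigen-equation at the maximal Perron entry, with $t=\sum_j(1-x_j)$ under the normalization $x_{\max}=1$) against the averaged estimate $D_{\max}-\lambda_1\ge m/\sigma\ge\frac{1-t}{n-t}$ (from the summed eigen-equation plus integrality of transmissions), and the crossing point of the two bounds is the smaller root $t_\star$ of $t^2-(n+1)t+1=0$. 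Both ingredients do appear in the paper --- the row estimate underlies \eqref{eq:up-ratio} and the summed identity underlies \autoref{claim:ratio} --- but your combination avoids the extremal-graph bootstrap entirely. What the paper buys is sharpness: since $(n+1)^2-4=(n-1)(n+3)$, your $t_\star$ equals $\frac{n+1-\sqrt{(n-1)(n+3)}}{2}$, which is exactly the paper's optimal constant for odd $n$, but for even $n$ the true optimum is the larger value $\frac{n+2-\sqrt{n^2+4n-4}}{2}$, and your method sees neither the parity distinction nor the extremal graphs. What your method buys is brevity and a bound valid for every graph at once, which is all the conjecture requires. One small point to tighten: positivity of $t^2-(n+1)t+1$ at $t=\frac{1}{n+1}$ only shows that $\frac{1}{n+1}$ lies outside the open interval between the two roots; to conclude $\frac{1}{n+1}<t_\star$, note in addition that the quadratic is negative at $t=1$ (its value is $1-n<0$), so $1$ separates the roots and $\frac{1}{n+1}<1$ forces $\frac{1}{n+1}<t_\star$ --- or simply observe that the roots have product $1$ and sum $n+1$, whence $t_\star=1/t_{\star\star}>1/(n+1)$.
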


It is shown that the conjecture holds for trees in \cite{LiuShuXue2018}. In this paper, we confirm \autoref{conj:D-lambda} 
by proving the following strengthened results.

\begin{theorem}\label{thm:Main}
Let $G$ be a connected non-transmission-regular graph of order $n$. 
\begin{enumerate}
\item[$(1)$] If $n$ is odd, then
\[
D_{\max}(G)-\lambda_1(G)\geq\frac{n+1-\sqrt{(n-1)(n+3)}}{2}.
\] 
Equality holds if and only if $G\cong K_{1,2,\ldots,2}$.

\item[$(2)$] If $n$ is even, then
\[
D_{\max}(G)-\lambda_1(G)\geq\frac{n+2-\sqrt{n^2+4n-4}}{2}.
\] 
Equality holds if and only if $G$ is isomorphic to some $(n-4)$-DVDR graph.
\end{enumerate}
\end{theorem}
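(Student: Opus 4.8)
My plan is to reduce to graphs of diameter $2$ and then to control $\lambda_1$ through a scalar resolvent equation, after which the two bounds become the two roots of a quadratic selected by the parity of $n$.

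\emph{Step 1 (reduction to diameter $2$).} First I would show that a graph minimising $D_{\max}-\lambda_1$ among connected non-transmission-regular graphs of order $n$ has diameter $2$: a pair of vertices at distance $\ge 3$ inflates $D_{\max}$ while contributing comparatively little to $\lambda_1$, so one expects $\mathrm{diam}(G)\ge 3$ to force a gap strictly larger than the claimed bounds. Once $\mathrm{diam}(G)=2$, every off-diagonal entry of $D(G)$ is $1$ or $2$, hence $D(G)=J-I+\bar A$, where $\bar A$ is the adjacency matrix of the complement $\bar G$. Consequently $D_i=(n-1)+\bar d_i$ and $D_{\max}=(n-1)+\bar\Delta$, where $\bar d_i=\deg_{\bar G}(v_i)$ and $\bar\Delta=\Delta(\bar G)$; moreover $G$ is non-transmission-regular precisely when $\bar G$ is non-regular.

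\emph{Step 2 (a scalar equation for $\lambda_1$).} Let $\mathbf x>0$ be the Perron vector for $\lambda_1$ and $s=\mathbf 1^{\top}\mathbf x>0$. From $(J-I+\bar A)\mathbf x=\lambda_1\mathbf x$ one gets $\big((\lambda_1+1)I-\bar A\big)\mathbf x=s\,\mathbf 1$; since $\mu:=\lambda_1+1\ge n>\bar\Delta\ge\lambda_1(\bar A)$, the matrix $\mu I-\bar A$ is invertible, and applying $\mathbf 1^{\top}$ yields
\[
\sum_{k=1}^{n}\frac{\beta_k}{\mu-\theta_k}=1,\qquad \sum_{k=1}^{n}\beta_k=n,
\]
where $\theta_1\ge\cdots\ge\theta_n$ are the eigenvalues of $\bar A$ and $\beta_k=(\mathbf 1^{\top}\mathbf u_k)^2$ are the squared overlaps of $\mathbf 1$ with an orthonormal eigenbasis. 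Thus $\mu$ is the largest root of an explicit rational equation and, because $D_{\max}-\lambda_1=(n+\bar\Delta)-\mu$, the theorem is equivalent to an upper bound on this largest root.

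\emph{Step 3 (the main estimate).} I would regard $\nu=\sum_k\beta_k\,\delta_{\theta_k}$ as a positive measure of total mass $n$, supported in $(-\infty,\bar\Delta]$, with first moment $\int\theta\,d\nu=\mathbf 1^{\top}\bar A\mathbf 1=\sum_i\bar d_i=2|E(\bar G)|$. Since $\theta\mapsto(\mu-\theta)^{-1}$ is convex, for fixed mass and first moment the largest root of $\int(\mu-\theta)^{-1}\,d\nu=1$ is maximised by a two-point measure whose atoms sit at the top of the support $\theta=\bar\Delta$ and at one lower point; non-regularity of $\bar G$ forces a mass $a\ge1$ away from $\bar\Delta$, and the integrality of the degree sequence pushes the lower atom down to $\theta=0$ with $a=1$ in the optimal configuration. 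Feeding mass $n-1$ at $\theta=\bar\Delta$ and mass $1$ at $\theta=0$ into the equation gives, after clearing denominators, $\mu^2-(n+\bar\Delta)\mu+\bar\Delta\le 0$, whence
\[
D_{\max}-\lambda_1\ \ge\ \frac{(n+\bar\Delta)-\sqrt{(n+\bar\Delta)^2-4\bar\Delta}}{2}=:F(\bar\Delta).
\]
A short computation shows $F$ is strictly increasing in $\bar\Delta$ for $n>1$, so the bound is governed by the smallest admissible value of $\bar\Delta$.

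\emph{Step 4 (parity and equality).} The optimal profile—mass $n-1$ at $\theta=\bar\Delta$ and mass $1$ at $\theta=0$—is realised exactly by complements of the form (one isolated vertex)$\,+\,$($\bar\Delta$-regular graph on the remaining $n-1$ vertices), the isolated vertex contributing eigenvalue $0$ with overlap $1$. For $\bar\Delta=1$ this requires a perfect matching on $n-1$ vertices, which exists iff $n$ is odd; then $G\cong K_{1,2,\ldots,2}$ and the bound is $F(1)=\tfrac12\big(n+1-\sqrt{(n-1)(n+3)}\big)$. When $n$ is even no such matching exists, the best $\bar\Delta=1$ complement leaves two isolated vertices, and a direct comparison (amounting to $\sqrt{n^2+4n-4}-\sqrt{n^2+2n-7}\ge1$) shows it cannot beat $\bar\Delta=2$; the $\bar\Delta=2$ optimum is (isolated vertex)$\,+\,$($2$-regular graph on $n-1$ vertices), i.e.\ $G$ is an $(n-4)$-DVDR graph, with $F(2)=\tfrac12\big(n+2-\sqrt{n^2+4n-4}\big)$. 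The two genuinely delicate points, which I expect to be the main obstacles, are the diameter-$2$ reduction of Step~1 and the rigorous equality analysis in Steps~3--4: one must verify that the convexity/two-point estimate is tight only when the spectral measure of $\bar A$ is concentrated exactly at $\theta=\bar\Delta$ (mass $n-1$) and $\theta=0$ (mass $1$), which is precisely what forces $\bar G$ to be a matching, respectively a disjoint union of cycles plus an isolated vertex. The remaining steps are the routine quadratic manipulations indicated above.
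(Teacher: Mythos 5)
Your Step 3 is where the proof breaks. The variational problem you actually set up --- maximise the largest root $\mu$ of $\int(\mu-\theta)^{-1}\,d\nu=1$ over positive measures $\nu$ of total mass $n$, fixed first moment, supported below $\bar\Delta$ --- does not have its optimum at the profile you claim. Convexity of $\theta\mapsto(\mu-\theta)^{-1}$ pushes the lower atom to the \emph{bottom} of the admissible support, which for a graph of maximum degree $\bar\Delta$ is $-\bar\Delta$, not $0$; and ``integrality of the degree sequence'' cannot repair this, since the atoms of $\nu$ are eigenvalues of $\bar A$ weighted by the overlaps $(\mathbf 1^{\top}\mathbf u_k)^2$, which are neither degrees nor integers. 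Concretely, for $n$ odd and $\bar\Delta=1$, the measure with mass $\tfrac12$ at $\theta=-1$ and mass $n-\tfrac12$ at $\theta=1$ has the same mass $n$ and the same first moment $n-1$ as your matching-plus-isolated-vertex profile, but its root is $\mu=\tfrac12\bigl(n+\sqrt{n^2+4n}\bigr)$, strictly larger than your claimed maximum $\tfrac12\bigl(n+1+\sqrt{(n+1)^2-4}\bigr)$ (square both sides of $\sqrt{n^2+4n}>1+\sqrt{n^2+2n-3}$ to check). Hence the best bound your constraints can yield is $D_{\max}-\lambda_1\geq\tfrac12\bigl(n+2-\sqrt{n^2+4n}\bigr)=\frac{2}{n+2+\sqrt{n^2+4n}}$, which is strictly weaker than the theorem and even smaller than $\tfrac{1}{n+1}$, so it would not settle the conjecture either. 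Making this route rigorous requires substantially more information about the overlap measure of an actual graph than mass, first moment and support --- and that is precisely the hard part you have skipped.

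Step 1 is a second, independent gap: the reduction to diameter $2$ is asserted with only a heuristic (``one expects \dots''), and it is not clear it can be proved up front. Note that the paper never performs such a reduction; there, diameter $2$ is the \emph{last} fact established, not the first. The paper works with a minimiser $G^*$ and its Perron vector, uses the decomposition $D_{\max}-\lambda_1=\sum_{u}(D_{\max}-D_u)x_u^2+\sum_{\{u,v\}}d(u,v)(x_u-x_v)^2$ together with the a priori bound $\sigma(G^*)\leq\sigma_n$ to prove, in order: $nD_{\max}-2W=\gamma_n$; the exact ratio $x_{\max}/x_{\min}=(1-\sigma_n)^{-1}$ with $n-1$ vertices attaining $x_{\max}$; that exactly $n-1$ vertices attain $D_{\max}$; and finally $D_{\min}=n-1$, which forces a dominating vertex and only then diameter $2$ and the structure of $G^*$. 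Your outline inverts this order and leaves the inversion unsupported. The arithmetic in Step 4 (monotonicity of $F$, and $\sqrt{n^2+4n-4}-\sqrt{n^2+2n-7}\geq 1$) is correct, but it rests entirely on the two unproved steps above, so the proposal as it stands does not constitute a proof.
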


\section{Proof of \autoref{thm:Main}}

For a connected graph $G$ of order $n$, write $\sigma(G)$ for the difference between the maximum transmission and 
the distance spectral radius of $G$, i.e., 
\[
\sigma(G):=D_{\max}(G)-\lambda_1(G).
\]
In this section, we assume $G^*$ is a graph attaining the minimum of $\sigma(G)$ among connected non-transmission-regular 
graphs $G$ of order $n$. Let $\x$ be the positive unit eigenvector of $D(G^*)$ corresponding to $\lambda_1(G^*)$. For 
convenience, we denote $x_{\max}:=\max\{x_v\,|\, v\in V(G^*)\}$, $x_{\min}:=\min\{x_v\,|\, v\in V(G^*)\}$. Suppose that 
$D_{\max}$ and $W$ are the maximum transmission and Wiener index of $G^*$, respectively. An easy calculation implies that
\begin{align}\label{eq:D-lambda}
D_{\max}-\lambda_1(G^*) & = D_{\max}-2\sum_{\{u,v\}\subseteq V(G)} d(u,v) x_ux_v \nonumber \\
& = \sum_{u\in V(G)} (D_{\max}-D_u) x_u^2+\sum_{\{u,v\}\subseteq V(G)} d(u,v) (x_u-x_v)^2 \\
& \geq (nD_{\max}-2W) x_{\min}^2+\sum_{\{u,v\}\subseteq V(G)} d(u,v) (x_u-x_v)^2. \nonumber
\end{align}  

Before continuing the proof of \autoref{thm:Main}, we need the following two results.

\begin{lemma}
Let $K_{1,2,\ldots,2}$ be the complete $r$-partite graph on $n$ vertices with $n=2r-1$. Then
\[
\sigma(K_{1,2,\ldots,2})=\frac{n+1-\sqrt{(n-1)(n+3)}}{2}.
\]
\end{lemma}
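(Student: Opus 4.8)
The plan is to evaluate $D_{\max}$ and $\lambda_1$ separately, exploiting the fact that $K_{1,2,\ldots,2}$ is a complete multipartite graph in which every pair of vertices in distinct parts is adjacent and every pair in a common part is at distance exactly $2$. Consequently a vertex lying in a part of size $s$ has transmission $2(s-1)+(n-s)=n+s-2$. Writing $n=2r-1$, the unique part of size $1$ contributes transmission $n-1$, whereas each of the $n-1$ vertices lying in a part of size $2$ has transmission $n$; hence $D_{\max}=n$.

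To compute $\lambda_1$ I would use the partition of the vertex set into the singleton part $\{u\}$ and the set $S$ of the remaining $n-1$ vertices, which is equitable for $D:=D(K_{1,2,\ldots,2})$. Indeed, from $u$ every other vertex is at distance $1$, giving row sum $n-1$ into $S$; from any $v\in S$ the partner of $v$ is at distance $2$ and the other $n-3$ vertices of $S$ are at distance $1$, giving row sum $1$ into $\{u\}$ and $2+(n-3)=n-1$ into $S$, independently of the choice of $v$. The associated quotient matrix is therefore
\[
Q=\begin{pmatrix} 0 & n-1 \\ 1 & n-1 \end{pmatrix},
\qquad
\det(\lambda I-Q)=\lambda^2-(n-1)\lambda-(n-1).
\]

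Since $Q$ is nonnegative and irreducible, its Perron eigenvector is positive; lifting it to the vector that is constant on $\{u\}$ and on $S$ produces, by equitability, a strictly positive eigenvector of $D$. As $D$ is itself nonnegative and irreducible, Perron--Frobenius forces the associated eigenvalue to be $\lambda_1$, so $\lambda_1$ is the larger root of the displayed quadratic:
\[
\lambda_1=\frac{(n-1)+\sqrt{(n-1)^2+4(n-1)}}{2}=\frac{(n-1)+\sqrt{(n-1)(n+3)}}{2}.
\]
Subtracting from $D_{\max}=n$ then yields $\sigma(K_{1,2,\ldots,2})=\tfrac{1}{2}\bigl(n+1-\sqrt{(n-1)(n+3)}\bigr)$, as claimed. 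The only step that genuinely requires justification, rather than routine arithmetic, is the identification of the larger eigenvalue of the $2\times 2$ quotient with the distance spectral radius $\lambda_1$ instead of some eigenvalue supported on the orthogonal complement of the cell-indicator subspace; this is exactly what the positivity of the lifted eigenvector settles via Perron--Frobenius.
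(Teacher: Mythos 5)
Your proof is correct and follows essentially the same route as the paper: both use the equitable partition into the distinguished vertex and the remaining $n-1$ vertices, obtain the quotient matrix $\begin{pmatrix} 0 & n-1 \\ 1 & n-1 \end{pmatrix}$, and subtract its Perron root from $D_{\max}=n$. Your additional verifications (the transmission formula giving $D_{\max}$ and the Perron--Frobenius lifting argument identifying the quotient's largest eigenvalue with $\lambda_1$) simply make explicit steps the paper leaves implicit.
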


\begin{proof}
Let $v$ be the distinguished vertex of $K_{1,2,\ldots,2}$. It is clear that the quotient matrix of 
$D(K_{1,2,\ldots,2})$ with respect to the equitable partition $\{v\}\cup (V(K_{1,2,\ldots,2})\backslash\{v\})$ is 
\[
Q=
\begin{bmatrix}
0 & n-1 \\
1 & n-1
\end{bmatrix}.
\]
As a consequence, $\lambda_1(K_{1,2,\ldots,2})$ is equal to the largest eigenvalue of $Q$, i.e.,
\[
\lambda_1(K_{1,2,\ldots,2})=\frac{n-1+\sqrt{(n-1)(n+3)}}{2}.
\]
Clearly, $D_{\max}(K_{1,2,\ldots,2})=n$, we immediately obtain
\[
\sigma(K_{1,2,\ldots,2})=D_{\max}(K_{1,2,\ldots,2})-\lambda_1(K_{1,2,\ldots,2})=\frac{n+1-\sqrt{(n-1)(n+3)}}{2},
\]
completing the proof.
\end{proof}

\begin{lemma}
Let $G$ be an $(n-4)$-DVDR graph on $n$ vertices. Then
\[
\sigma(G)=\frac{n+2-\sqrt{n^2+4n-4}}{2}.
\]
\end{lemma}

\begin{proof}
Let $v$ be the distinguished vertex of $G$. Then the quotient matrix of $D(G)$ with respect to the equitable 
partition $\{v\}\cup (V(G)\backslash\{v\})$ is 
\[
\begin{bmatrix}
0 & n-1 \\
1 & n
\end{bmatrix}.
\]
Therefore, we obtain that
\[
\lambda_1(G)=\frac{n+\sqrt{n^2+4n-4}}{2}.
\]
Clearly, $D_{\max}(G)=n+1$, we have
\[
\sigma(G)=D_{\max}(G)-\lambda_1(G)=\frac{n+2-\sqrt{n^2+4n-4}}{2},
\]
completing the proof.
\end{proof}

In the following, we denote 
\[
\sigma_n:=\frac{n+\gamma_n+\sqrt{(n+\gamma_n)^2-4\gamma_n}}{2},
\]
where $\gamma_n=1$ if $n$ is odd; $\gamma_n=2$ if $n$ is even. By simple algebra, we see
\begin{equation}\label{eq:equality-sigma-gamma}
\sigma_n^2-(n+\gamma_n)\sigma_n+\gamma_n=0.
\end{equation}
It is obvious that if $n$ is odd, then $\sigma_n=\sigma(K_{1,2,\ldots,2})$; if $n$ is even, then 
$\sigma_n=\sigma(G)$ for any $(n-4)$-DVDR graph $G$. Noting that $K_{1,2,\ldots,2}$ and each 
$(n-4)$-DVDR graph are both connected non-transmission-regular graphs, we see 
$\sigma(G^*)\leq\sigma_n$. \vspace{3mm}

\noindent {\bfseries Proof of \autoref{thm:Main}.}
Let $u$ and $v$ be two vertices such that $x_u=x_{\max}$ and $x_v=x_{\min}$, respectively. For short, 
write $\lambda_1:=\lambda_1(G^*)$. Using equation $D(G^*)\x=\lambda_1\x$ with respect to vertex $u$ we see
\begin{align*}
\lambda_1 x_u 
& = \sum_{w\in V(G^*)} d(u,w) x_w \\
& = \sum_{w\in V(G^*)\backslash\{v\}} d(u,w) x_w+d(u,v) x_v \\
& \leq (D_{\max}-d(u,v)) x_u + d(u,v)x_v,
\end{align*}
which yields that
\[
D_{\max}-\lambda_1\geq d(u,v)\left(1-\frac{x_v}{x_u}\right)\geq 1-\frac{x_v}{x_u}.
\]
On the other hand, $\sigma(G^*)=D_{\max}-\lambda_1\leq\sigma_n$. Hence, we obtain 
\begin{equation}\label{eq:up-ratio}
\frac{x_{\max}}{x_{\min}}=\frac{x_u}{x_v}\leq \frac{1}{1-\sigma_n}.
\end{equation}

Our proof hinges on the following five claims.

\begin{claim}\label{claim:nDmax-2W}
$nD_{\max}-2W=\gamma_n$.
\end{claim}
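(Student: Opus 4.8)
The plan is to read $nD_{\max}-2W$ as a ``transmission defect'' and trap it between $1$ and $\gamma_n$. First I would use $2W=\sum_{v}D_v$ to rewrite
\[
nD_{\max}-2W=\sum_{v\in V(G^*)}\bigl(D_{\max}-D_v\bigr).
\]
Every summand is a nonnegative integer, and since $G^*$ is non-transmission-regular the vertex of minimum transmission contributes a strictly positive term; hence $nD_{\max}-2W\ge 1$. So the entire content of the claim reduces to an upper bound, which I will then combine with a parity observation to force the exact value $\gamma_n$.

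For the upper bound I would start from the last line of \eqref{eq:D-lambda}. Discarding the nonnegative distance-weighted sum $\sum_{\{u,v\}}d(u,v)(x_u-x_v)^2$, and since $\sigma(G^*)=D_{\max}-\lambda_1\le\sigma_n$, I obtain
\[
\sigma_n\ \ge\ (nD_{\max}-2W)\,x_{\min}^2 .
\]
To make this useful I need a lower estimate for $x_{\min}$. The ratio bound \eqref{eq:up-ratio} gives $x_{\min}\ge(1-\sigma_n)x_{\max}$, while the normalization $\sum_v x_v^2=1$ forces $x_{\max}\ge 1/\sqrt n$; together these yield $x_{\min}^2\ge(1-\sigma_n)^2/n$ and therefore
\[
nD_{\max}-2W\ \le\ \frac{n\sigma_n}{(1-\sigma_n)^2}.
\]

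The final step is to simplify the right-hand side through the defining relation \eqref{eq:equality-sigma-gamma}, rewritten as $n\sigma_n=\sigma_n^2-\gamma_n\sigma_n+\gamma_n$. For odd $n$ ($\gamma_n=1$) the numerator is $(1-\sigma_n)^2+\sigma_n$, so the bound reads $1+\sigma_n/(1-\sigma_n)^2$; because $\sigma_n$ decreases in $n$ and already $\sigma_3=2-\sqrt3$ satisfies $\sigma_n^2-3\sigma_n+1>0$ (equivalently $\sigma_n/(1-\sigma_n)^2<1$), the bound is strictly below $2$, giving $nD_{\max}-2W=1=\gamma_n$. For even $n$ ($\gamma_n=2$) the numerator is $(1-\sigma_n)^2+1$, so the bound is $1+(1-\sigma_n)^{-2}$; using $\sigma_n\le\sigma_4=3-\sqrt7$ one checks $(1-\sigma_n)^2>1/3$, so the bound is below $4$ and $nD_{\max}-2W\le 3$. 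Here I would invoke parity: both $2W$ and $nD_{\max}$ are even when $n$ is even, so $nD_{\max}-2W$ is an even integer in $[1,3]$, forcing the value $2=\gamma_n$.

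The hard part is the even case. The crude estimate $x_{\max}\ge 1/\sqrt n$ is too lossy to separate $2$ from $3$ for small orders (at $n=4$ the analytic bound is only $\approx 3.4$), so the parity argument, rather than the inequality alone, is what closes the gap; the point to handle with care is therefore checking that $n\sigma_n/(1-\sigma_n)^2$ stays strictly below $\gamma_n+2$ for \emph{every} $n$, which by monotonicity of $\sigma_n$ reduces to the single evaluations at $n=3$ and $n=4$.
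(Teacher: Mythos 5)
Your proof is correct and takes essentially the same route as the paper's: both rest on the expansion \eqref{eq:D-lambda}, the ratio bound \eqref{eq:up-ratio}, the normalization estimate $x_{\max}^2\geq 1/n$, and the quadratic relation \eqref{eq:equality-sigma-gamma} to show that $nD_{\max}-2W$ cannot reach $2$ when $n$ is odd or $4$ when $n$ is even, with the parity of $nD_{\max}-2W$ closing the even case. The only difference is presentational --- the paper argues by contradiction (assuming $nD_{\max}-2W\geq 2$ or $\geq 4$ and deriving $D_{\max}-\lambda_1>\sigma_n$), whereas you derive the equivalent upper bound $nD_{\max}-2W\leq n\sigma_n/(1-\sigma_n)^2$ directly --- but the underlying inequalities are identical.
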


\noindent {\it Proof of \autoref{claim:nDmax-2W}.}
We prove this claim by contradiction. Let $n$ be odd, from \eqref{eq:D-lambda} and \eqref{eq:up-ratio} we have
\[
D_{\max}-\lambda_1 \geq 2x_{\min}^2\geq 2(1-\sigma_n)^2x_{\max}^2>\frac{2(1-\sigma_n)^2}{n}>\sigma_n,
\]
the last inequality follows from $\sigma_n^2-(n+1)\sigma_n+1=0$. This is a contradiction with 
$D_{\max}-\lambda_1\leq\sigma_n$.

Let $n$ be even. Noting that $2\,|\,(nD_{\max}-2W)$, we have $nD_{\max}-2W\geq 4$. It follows from 
\eqref{eq:D-lambda} and \eqref{eq:up-ratio} that
\[
D_{\max}-\lambda_1 \geq 4x_{\min}^2\geq 4(1-\sigma_n)^2x_{\max}^2>\frac{4(1-\sigma_n)^2}{n}>\sigma_n,
\]
a contradiction.

\begin{claim}\label{claim:Du-Dmin-Dv-Dmax}
$D_u>D_{\min}$ and $D_v<D_{\max}$.
\end{claim}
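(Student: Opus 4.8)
The plan is to prove both inequalities directly from the eigenvalue equation $D(G^*)\x=\lambda_1\x$, read off at the two extremal vertices $u$ and $v$, combined with the standard fact that the Perron root of a nonnegative irreducible matrix lies between its minimum and maximum row sums, i.e. $D_{\min}\le\lambda_1\le D_{\max}$. A preliminary observation I would record first is that $x_{\max}>x_{\min}$: since $G^*$ is non-transmission-regular, its Perron eigenvector $\x$ cannot be constant (a constant eigenvector would force all row sums of $D(G^*)$ to coincide, hence transmission-regularity), and in particular $u\ne v$, so $d(u,v)\ge 1$. This strictness is exactly where the non-transmission-regularity hypothesis enters, and it is the crux of the claim.

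For $D_v<D_{\max}$, I would evaluate the eigenvalue equation at $v$, isolate the contribution of the maximum-component vertex $u$, and bound every remaining component below by $x_{\min}$:
\[
\lambda_1 x_v=\sum_{w\ne v}d(v,w)x_w\ge\bigl(D_v-d(v,u)\bigr)x_{\min}+d(v,u)x_{\max}.
\]
Since $x_v=x_{\min}$, rearranging yields $(\lambda_1-D_v)x_{\min}\ge d(v,u)(x_{\max}-x_{\min})>0$, whence $D_v<\lambda_1\le D_{\max}$.

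For $D_u>D_{\min}$, I would run the symmetric argument at $u$, isolating the minimum-component vertex $v$ and bounding the remaining components above by $x_{\max}$:
\[
\lambda_1 x_u=\sum_{w\ne u}d(u,w)x_w\le\bigl(D_u-d(u,v)\bigr)x_{\max}+d(u,v)x_{\min}.
\]
With $x_u=x_{\max}$, this rearranges to $(\lambda_1-D_u)x_{\max}\le d(u,v)(x_{\min}-x_{\max})<0$, so $D_u>\lambda_1\ge D_{\min}$.

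This argument is short and self-contained, and notably requires neither \autoref{claim:nDmax-2W} nor the ratio bound \eqref{eq:up-ratio}. The only points needing care are the strictness $x_{\max}>x_{\min}$ discussed above and the correct orientation of the two row-sum bounds $D_{\min}\le\lambda_1$ and $\lambda_1\le D_{\max}$, both of which are standard for the Perron root of the nonnegative symmetric irreducible matrix $D(G^*)$; the second of these is in any case already implicit in the earlier observation $\sigma(G^*)=D_{\max}-\lambda_1\ge 0$. I do not anticipate a genuine obstacle here beyond keeping the inequality directions straight when splitting off the extremal term and recombining the residual sum into $D_v-d(v,u)$ (respectively $D_u-d(u,v)$).
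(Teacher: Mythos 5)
Your proof is correct, and it rests on the same basic mechanism as the paper's --- reading the eigenvalue equation $D(G^*)\x=\lambda_1\x$ at the extremal vertices and comparing $\lambda_1$ with row sums --- but the two arguments distribute the work differently. The paper bounds every component uniformly ($x_w\le x_u$ at $u$, and symmetrically $x_w\ge x_v$ at $v$), which only yields the non-strict inequalities $D_v\le\lambda_1\le D_u$; the strictness needed for the claim is then imported from the standard fact that a connected non-transmission-regular graph satisfies $D_{\min}<\lambda_1<D_{\max}$ strictly (the paper invokes $\lambda_1>D_{\min}$ without proof). You instead split off the single cross term $d(u,v)x_v$ (respectively $d(v,u)x_u$) before bounding the remaining components, so the gap $d(u,v)(x_{\max}-x_{\min})>0$ --- justified by your preliminary observation that non-transmission-regularity forces the Perron vector to be non-constant, hence $u\ne v$ and $x_{\max}>x_{\min}$ --- delivers the strict inequalities $D_v<\lambda_1<D_u$ directly, after which only the non-strict row-sum bounds $D_{\min}\le\lambda_1\le D_{\max}$ are needed. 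The trade-off: the paper's proof is shorter but leans on an unproven (though standard) strict spectral bound, whereas yours is self-contained, in effect re-deriving that strictness, and it establishes the marginally stronger conclusion that $\lambda_1$ is strictly sandwiched between $D_v$ and $D_u$.
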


\noindent {\it Proof of \autoref{claim:Du-Dmin-Dv-Dmax}.}
By equation $D(G^*)\x=\lambda_1\x$ with respect to vertex $u$ we have
\[
\lambda_1 x_u=\sum_{w\in V(G^*)} d(w,u) x_w\leq D_u x_u,
\]
which implies $\lambda_1\leq D_u$. Since $\lambda_1>D_{\min}$, we obtain $D_u>D_{\min}$.
Similarly, we can prove that $D_v<D_{\max}$, completing the proof of this claim.

\begin{claim}\label{claim:ratio}
$\displaystyle\frac{x_{\max}}{x_{\min}}=\frac{1}{1-\sigma_n}$, $D_{\max}-\lambda_1=\sigma_n$, and there 
are $(n-1)$ vertices attaining $x_{\max}$.
\end{claim}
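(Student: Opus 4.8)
The plan is to prove all three assertions at once by trapping $s:=\sigma(G^*)=D_{\max}-\lambda_1$ between a lower and an upper bound and then reading off the equality cases. Write $T:=\sum_{w\in V(G^*)}x_w=\mathbf{1}^{\top}\x$. Note first that $\x$ is non-constant (otherwise $D(G^*)\x=\lambda_1\x$ would make all row sums of $D(G^*)$ equal, i.e.\ $G^*$ transmission-regular), so $x_{\min}<x_{\max}$ and in particular $u\neq v$. I will detail the odd case $\gamma_n=1$; the even case runs in parallel with $\gamma_n=2$.

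I would begin with an exact identity obtained by left-multiplying $D(G^*)\x=\lambda_1\x$ by $\mathbf{1}^{\top}$, namely $\sum_w D_w x_w=\lambda_1 T$, which rearranges to
\[
sT=\sum_{w\in V(G^*)}(D_{\max}-D_w)\,x_w .
\]
By \autoref{claim:nDmax-2W} the nonnegative integers $D_{\max}-D_w$ sum to $\gamma_n=1$, so a \emph{single} vertex $z$ is deficient, with $D_z=D_{\max}-1$ and $D_w=D_{\max}$ for $w\neq z$. By \autoref{claim:Du-Dmin-Dv-Dmax} the vertex $v$ has $D_v<D_{\max}$, forcing $z=v$ and hence $x_z=x_{\min}$; the identity collapses to $sT=x_{\min}$. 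Next, applying $D(G^*)\x=\lambda_1\x$ at $z=v$ and using $d(z,w)\ge 1$ gives
\[
(1-s)x_{\min}=(\lambda_1-D_z)x_{\min}=\sum_{w\neq z}d(z,w)(x_w-x_{\min})\ge T-nx_{\min},
\]
so $T\le(n+1-s)x_{\min}$. Combined with $sT=x_{\min}$ this yields $s(n+1-s)\ge 1$, i.e.\ $s^2-(n+1)s+1\le 0$. Since $\sigma_n$ is the smaller root of \eqref{eq:equality-sigma-gamma} and $s\le\sigma_n$, this forces $s\ge\sigma_n$, hence $s=\sigma_n$ (the second assertion) and the displayed inequality is tight, giving $T=(n+1-s)x_{\min}$.

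The ratio and the count then follow by a squeeze. The $n-1$ entries $x_w$ with $w\neq z$ sum to $T-x_{\min}=(n-s)x_{\min}$, and their maximum is $x_{\max}$ (attained at $u\neq z$), so
\[
x_{\max}\ge\frac{(n-s)x_{\min}}{n-1}=\frac{x_{\min}}{1-s},
\]
where the last step is the algebraic identity $(n-s)(1-s)=n-1$, a direct consequence of $s^2-(n+1)s+1=0$. Against the reverse bound $x_{\max}/x_{\min}\le 1/(1-\sigma_n)$ already recorded in \eqref{eq:up-ratio}, this pins down $x_{\max}/x_{\min}=1/(1-\sigma_n)$ (the first assertion). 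Crucially, the averaging step is now an equality, which holds only when every $x_w$ with $w\neq z$ equals the maximum; thus exactly the $n-1$ vertices other than $z=v$ attain $x_{\max}$ (the third assertion).

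The main obstacle is the even case. There $\gamma_n=2$, so the total deficiency may either sit on one vertex (deficit $2$) or split across two vertices (deficit $1$ each). In the former, with that vertex equal to $v$, one gets $sT=2x_{\min}$ and $T\le(n+2-s)x_{\min}$, hence $s(n+2-s)\ge 2$ and the same squeeze via the identity $(n+1-s)(1-s)=n-1$ coming from $s^2-(n+2)s+2=0$. The delicate point is excluding the split configuration, equivalently showing the minimum-transmission vertex is unique and coincides with $v$; I would dispose of it by substituting the tightness relation $T=(n+2-s)x_{\min}$ together with the unit-norm constraint $\sum_w x_w^2=1$ into the eigenvalue equation at the putative second low-transmission vertex and deriving a contradiction with $s\le\sigma_n$.
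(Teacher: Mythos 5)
Your treatment of the odd case is correct, and it takes a genuinely different route from the paper's: you use \autoref{claim:nDmax-2W} to pin down the unique vertex of deficient transmission, identify it with $v$ via \autoref{claim:Du-Dmin-Dv-Dmax}, and then feed the eigenvalue equation \emph{at $v$} back into the identity $sT=\sum_w(D_{\max}-D_w)x_w$ to obtain $T\le(n+1-s)x_{\min}$. The paper never locates the deficient vertices: it only uses the two crude bounds $\sum_w(D_{\max}-D_w)x_w\ge\gamma_n x_{\min}$ and $T\le x_{\min}+(n-1)x_{\max}$, and closes the squeeze with the same algebraic identity coming from \eqref{eq:equality-sigma-gamma}. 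That structural difference is exactly why the paper's argument is uniform in $\gamma_n$, while yours is forced into a case analysis on how the deficit $\gamma_n=2$ is distributed when $n$ is even.

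That is where the genuine gap lies: the split configuration (two distinct vertices $z_1,z_2$ with $D_{z_1}=D_{z_2}=D_{\max}-1$) is acknowledged but never actually disposed of. The fix you sketch is circular: the tightness relation $T=(n+2-s)x_{\min}$ was derived only under the assumption that the whole deficit $2$ sits on the single vertex $v$, so it is not available in the split configuration; and the unit-norm constraint cannot be the missing ingredient, since the eigenvalue equation and all the inequalities in play are homogeneous in $\x$. The good news is that the split case dies by the very mechanism you already built. By \autoref{claim:Du-Dmin-Dv-Dmax} we may take $z_1=v$, so $sT=x_{z_1}+x_{z_2}\ge 2x_{\min}$, while the eigenvalue equation at $v$, whose deficit is now $1$ rather than $2$, gives
\[
(1-s)x_{\min}=\sum_{w\ne v}d(v,w)(x_w-x_{\min})\ge T-nx_{\min},
\]
i.e.\ $T\le(n+1-s)x_{\min}$. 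Combining the two yields $s(n+1-s)\ge 2$. But $s\le\sigma_n<1<(n+1)/2$ and $t\mapsto t(n+1-t)$ is increasing on $[0,(n+1)/2]$, so by \eqref{eq:equality-sigma-gamma} with $\gamma_n=2$ we get $s(n+1-s)\le\sigma_n(n+1-\sigma_n)=\sigma_n(n+2-\sigma_n)-\sigma_n=2-\sigma_n<2$, a contradiction. With this patch your proof is complete; as submitted, the even case is not.
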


\noindent {\it Proof of \autoref{claim:ratio}.}
Since $D(G^*)\x=\lambda_1\x$, we have $\lambda_1\bm{1}\x=\bm{1}D(G^*)\x$, where $\bm{1}$ is the all-ones 
vector of dimension $n$. Hence,
\[
\lambda_1\sum_{w\in V(G^*)} x_w=\sum_{w\in V(G^*)} D_w x_w.
\]
It follows from \autoref{claim:nDmax-2W} that
\begin{align*}
(D_{\max}-\lambda_1)\sum_{w\in V(G^*)} x_w & =\sum_{w\in V(G^*)} (D_{\max}-D_w) x_w \\ 
& \geq (nD_{\max}-2W) x_{\min} \\
& =\gamma_n x_{\min}.
\end{align*}
Therefore we have
\begin{equation}\label{eq:D-lambda-lower-bound}
D_{\max}-\lambda_1\geq\frac{\gamma_n x_{\min}}{\sum_{w\in V(G^*)} x_w}\geq
\frac{\gamma_n x_{\min}}{x_{\min}+(n-1)x_{\max}}.
\end{equation}
Since $D_{\max}-\lambda_1\leq\sigma_n$, we find that
\[
\frac{x_{\max}}{x_{\min}}\geq\frac{\gamma_n/\sigma_n-1}{n-1}.
\]
On the other hand, by \eqref{eq:up-ratio} we have
\[
\frac{x_{\max}}{x_{\min}}\leq \frac{1}{1-\sigma_n}.
\]
In view of \eqref{eq:equality-sigma-gamma} we see
\[
\left(\frac{\gamma_n}{\sigma_n}-1\right)(1-\sigma_n)=\frac{\gamma_n}{\sigma_n}+\sigma_n-\gamma_n-1=n-1,
\]
which yields that
\[
\frac{\gamma_n/\sigma_n-1}{n-1}=\frac{1}{1-\sigma_n}.
\]
Hence, all inequalities above must be equalities, we immediately obtain $x_{\max}/x_{\min}=(1-\sigma_n)^{-1}$.
Noting that $D_{\max}-\lambda_1\leq\sigma_n$, together with \eqref{eq:D-lambda-lower-bound}, we get 
$D_{\max}-\lambda_1=\sigma_n$, completing the proof of this claim.

\begin{claim}\label{claim:number-Dmax}
Let $S=\{w\in V(G^*): D_w=D_{\max}\}$. Then $|S|=n-1$.
\end{claim}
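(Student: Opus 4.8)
The plan is to exploit the rigid structure of the Perron eigenvector $\x$ that is forced by \autoref{claim:ratio}. That claim tells us that $n-1$ of the coordinates of $\x$ equal $x_{\max}$, and it also gives $x_{\max}/x_{\min}=1/(1-\sigma_n)>1$, so $x_{\min}<x_{\max}$ strictly. Hence exactly one vertex, namely $v$ with $x_v=x_{\min}$, carries a sub-maximal entry, while every other vertex $w$ satisfies $x_w=x_{\max}$. The idea is that with only a single non-maximal coordinate, the eigenvalue equation at each vertex $w\neq v$ converts spectral data into an exact formula for the transmission $D_w$, after which the maximality of $D_{\max}$ will pin down the distances from $v$.

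Concretely, I would write $D(G^*)\x=\lambda_1\x$ at a vertex $w\neq v$, isolating the single term coming from $v$:
\[
\lambda_1 x_{\max}=\sum_{t\neq v} d(w,t)\,x_{\max}+d(w,v)\,x_{\min}
= x_{\max}\bigl(D_w-d(w,v)\bigr)+d(w,v)\,x_{\min}.
\]
Dividing by $x_{\max}$ and using $x_{\min}/x_{\max}=1-\sigma_n$ from \autoref{claim:ratio} yields the key identity
\[
D_w=\lambda_1+d(w,v)\,\sigma_n,\qquad w\neq v,
\]
so that the transmission of every vertex other than $v$ is completely determined by its distance to $v$.

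To finish, I would invoke $D_{\max}-\lambda_1=\sigma_n$ (again \autoref{claim:ratio}) together with $\sigma_n>0$. Since $D_w\le D_{\max}=\lambda_1+\sigma_n$, the identity forces $d(w,v)\le 1$; as $w\neq v$ gives $d(w,v)\ge 1$, we get $d(w,v)=1$ and hence $D_w=D_{\max}$ for every $w\neq v$. By \autoref{claim:Du-Dmin-Dv-Dmax} we have $D_v<D_{\max}$, so $v\notin S$, and therefore $S=V(G^*)\setminus\{v\}$, i.e. $|S|=n-1$. As a by-product $v$ is adjacent to all other vertices, so it will serve as the distinguished vertex in the subsequent identification of the extremal graphs.

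I expect the only real obstacle to be choosing the right tool rather than carrying out any hard estimate. One could instead argue by counting: \autoref{claim:nDmax-2W} gives $\sum_{w\in V(G^*)}(D_{\max}-D_w)=\gamma_n$, which settles the odd case ($\gamma_n=1$) immediately, but in the even case ($\gamma_n=2$) leaves open the possibility of two vertices of transmission $D_{\max}-1$, which would require a separate argument to exclude. The eigenvector identity above sidesteps this case distinction and treats both parities uniformly, so the crux is simply recognizing that \autoref{claim:ratio} makes $\x$ constant except at a single vertex, and that the eigenvalue equation then reads off each $D_w$ exactly.
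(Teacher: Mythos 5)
Your proof is correct, but it takes a genuinely different route from the paper's. The paper argues by counting: by \autoref{claim:nDmax-2W}, $\sum_{w}(D_{\max}-D_w)=nD_{\max}-2W=\gamma_n$, which settles the odd case at once, and in the even case leaves only the possibilities $|S|\in\{n-2,n-1\}$; the case $|S|=n-2$ is then excluded by pigeonhole --- since $n-1$ vertices attain $x_{\max}$ (\autoref{claim:ratio}), one of the two deficient vertices $w$ would satisfy $x_w=x_{\max}$ and $D_w=D_{\max}-1=D_{\min}$, contradicting \autoref{claim:Du-Dmin-Dv-Dmax}. You instead exploit the eigenvalue equation at each vertex $w\neq v$, which, because the eigenvector is constant off $v$ and $x_{\min}/x_{\max}=1-\sigma_n$, gives the exact identity $D_w=\lambda_1+d(w,v)\,\sigma_n$; combined with $D_{\max}-\lambda_1=\sigma_n$ and $\sigma_n>0$ this forces $d(w,v)=1$ and $D_w=D_{\max}$ for every $w\neq v$, while $v\notin S$ by \autoref{claim:Du-Dmin-Dv-Dmax}. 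Your argument is uniform in the parity (you correctly identified that the counting route needs a separate case analysis for even $n$), and it yields a by-product strictly stronger than the claim: $v$ is adjacent to every other vertex, a fact the paper only extracts after \autoref{claim:D-max=n} via $D_{\min}=n-1$ and the diameter-two observation; feeding your identity into the later steps would shorten the end of the proof of \autoref{thm:Main}. What the paper's counting argument buys in exchange is economy in the odd case, where \autoref{claim:nDmax-2W} alone suffices with no further computation.
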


\noindent {\it Proof of \autoref{claim:number-Dmax}.}
If $n$ is odd, the result follows from \autoref{claim:nDmax-2W} immediately. If $n$ is even, then 
$|S|\in\{n-2,n-1\}$ by \autoref{claim:nDmax-2W}. In view of \autoref{claim:ratio}, there are $(n-1)$ 
vertices attaining $x_{\max}$. Hence, if $|S|=n-2$, there exists a vertex $w$ such that $x_w=x_{\max}$, 
while $D_w=D_{\max}-1=D_{\min}$, which is a contradiction with \autoref{claim:Du-Dmin-Dv-Dmax}.
Hence, $|S|=n-1$, as required.

\begin{claim}\label{claim:D-max=n}
$D_{\max}=n+\gamma_n-1$ and $D_{\min}=n-1$.
\end{claim}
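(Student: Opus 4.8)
The plan is to pin down both transmissions at once by feeding the structural information of the previous claims into the eigen-equation at the minimum-entry vertex. The key observation is that by \autoref{claim:ratio} the eigenvector $\x$ takes only two distinct values: since $x_{\max}/x_{\min}=(1-\sigma_n)^{-1}>1$, the entry $x_{\min}$ is strictly smaller than $x_{\max}$, and because exactly $n-1$ vertices attain $x_{\max}$, the vertex $v$ with $x_v=x_{\min}$ is the \emph{unique} vertex whose entry lies below $x_{\max}$. In parallel, \autoref{claim:number-Dmax} gives $|S|=n-1$, and \autoref{claim:Du-Dmin-Dv-Dmax} forces $v\notin S$, so $v$ is also the unique vertex of minimum transmission while every other vertex has transmission $D_{\max}$.

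From this I would first extract the relation $D_{\max}-D_{\min}=\gamma_n$. Indeed, the transmission profile just described gives $2W=(n-1)D_{\max}+D_{\min}$, and substituting into $nD_{\max}-2W=\gamma_n$ from \autoref{claim:nDmax-2W} yields $D_{\max}-D_{\min}=\gamma_n$ immediately.

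The crux is then a single application of $D(G^*)\x=\lambda_1\x$ along the row indexed by $v$. Since every vertex other than $v$ carries the common entry $x_{\max}$, this row collapses to $\lambda_1 x_{\min}=x_{\max}\sum_{w\ne v}d(v,w)=D_{\min}\,x_{\max}$, and dividing by $x_{\min}$ together with $x_{\max}/x_{\min}=(1-\sigma_n)^{-1}$ gives the clean identity $\lambda_1=D_{\min}/(1-\sigma_n)$. This is the step that makes the whole claim go through, and it is possible only because the eigenvector was shown in \autoref{claim:ratio} to be two-valued; that is where I expect the single real idea to reside.

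Finally I would close the algebra. Combining $\lambda_1=D_{\min}/(1-\sigma_n)$ with $\lambda_1=D_{\max}-\sigma_n=D_{\min}+\gamma_n-\sigma_n$ (again from \autoref{claim:ratio}) produces $D_{\min}\sigma_n=(\gamma_n-\sigma_n)(1-\sigma_n)$. Expanding the right-hand side and replacing $\sigma_n^2$ via the quadratic \eqref{eq:equality-sigma-gamma} cancels the constant and $\sigma_n^2$ terms, collapsing it to exactly $(n-1)\sigma_n$; dividing through by $\sigma_n\ne 0$ gives $D_{\min}=n-1$, and hence $D_{\max}=n-1+\gamma_n$. The only place demanding care is this last bookkeeping, where one must invoke \eqref{eq:equality-sigma-gamma} at precisely the right moment to eliminate $\sigma_n^2$; apart from that there is no genuine obstacle, since all the hard analytic content has been absorbed into the preceding claims.
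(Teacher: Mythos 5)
Your proof is correct and follows essentially the same route as the paper: both deduce $D_v=D_{\min}=D_{\max}-\gamma_n$ from Claims \ref{claim:nDmax-2W}, \ref{claim:Du-Dmin-Dv-Dmax} and \ref{claim:number-Dmax}, apply the eigen-equation at $v$ using the two-valued eigenvector and the ratio $x_{\max}/x_{\min}=(1-\sigma_n)^{-1}$, and close with $\lambda_1=D_{\max}-\sigma_n$ and the quadratic \eqref{eq:equality-sigma-gamma}. The only difference is cosmetic: you solve the resulting equation for $D_{\min}$ first, while the paper solves for $D_{\max}$.
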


\noindent {\it Proof of \autoref{claim:D-max=n}.}
According to \autoref{claim:ratio}, there are $(n-1)$ vertices attaining $x_{\max}$, and the remaining vertex 
$v$ attaining $x_{\min}$. By \autoref{claim:Du-Dmin-Dv-Dmax}, $D_v<D_{\max}$. Together with \autoref{claim:number-Dmax} 
we have $D_v=D_{\max}-\gamma_n=D_{\min}$. It follows that
\[
\lambda_1 x_v =\sum_{w\in V(G^*)} d(v,w) x_w =x_{\max} \sum_{w\in V(G^*)} d(v,w)=(D_{\max}-\gamma_n) x_{\max}.
\]
Recall that $D_{\max}-\lambda_1=\sigma_n$. Therefore we obtain
\[
D_{\max}-\sigma_n=\lambda_1=(D_{\max}-\gamma_n)\cdot\frac{x_{\max}}{x_{\min}}=\frac{D_{\max}-\gamma_n}{1-\sigma_n},
\]
which implies that
\[
D_{\max}=\sigma_n+\frac{\gamma_n}{\sigma_n}-1=n+\gamma_n-1.
\]
Hence, $D_{\min}=D_{\max}-\gamma_n=n-1$, completing the proof of this claim. \vspace{3mm}

Now, we continue our proof. Since $D_{\min}=n-1$, we get $d(v)=n-1$, which yields that the diameter of $G^*$ 
is two. As a consequence, for each vertex $w\in V(G^*)$ we have
\[
D_w=\sum_{z\in V(G^*)} d(w,z)=d(w)+2(n-1-d(w))=2(n-1)-d(w).
\]
Therefore, for each vertex $w\in V(G^*)\backslash\{v\}$ we get 
\[
d(w)=2(n-1)-D_w=n-1-\gamma_n=
\begin{cases}
        n-2, & \text{if $n$ is odd}, \\
        n-3, & \text{if $n$ is even}.
    \end{cases}
\]
Hence, $G^*\cong K_{1,2,\ldots,2}$ if $n$ is odd, and  $G^*$ is isomorphic to some $(n-4)$-DVDR graph if $n$ is even. 
\hfill $\Box$

\section{Concluding remarks}

In \autoref{thm:Main} we show that the extremal graph is an $(n-4)$-DVDR graph for even $n$. For any $(n-4)$-DVDR graph $G$ 
with distinguished vertex $v$, it is clear that the complement of $G-v$ is the disjoint union of some cycles. In addition, by 
simple algebra, we find that 
\[
\sigma(G)=\frac{n+2-\sqrt{n^2+4n-4}}{2}>\frac{1}{n/2+1}>\frac{1}{n+1}.
\] 

Let $H$ be an $r$-uniform hypergraph (i.e., a family of some $r$-element subsets of a set). The shadow graph of $H$ is the 
simple graph $\partial (H)$ on the same vertex set as $H$, where two vertices are adjacent if they are covered by at least 
one edge of $H$. Similarly, we can define the distance matrix for connected hypergraphs and non-transmission-regular hypergraphs. 
It is clear that $H$ and $\partial (H)$ have the same distance matrix for any connected uniform hypergraph $H$. Hence, we 
immediately obtain the following generalized results from \autoref{thm:Main}. 

\begin{theorem}
Let $H$ be a connected non-transmission-regular uniform hypergraph of order $n$. 
\begin{enumerate}
\item[$(1)$] If $n$ is odd, then
\[
D_{\max}(H)-\lambda_1(H)\geq\frac{n+1-\sqrt{(n-1)(n+3)}}{2}.
\] 
Equality holds if and only if $\partial (H)\cong K_{1,2,\ldots,2}$.

\item[$(2)$] If $n$ is even, then
\[
D_{\max}(H)-\lambda_1(H)\geq\frac{n+2-\sqrt{n^2+4n-4}}{2}.
\] 
Equality holds if and only if $\partial (H)$ is isomorphic to some $(n-4)$-DVDR graph.
\end{enumerate}
\end{theorem}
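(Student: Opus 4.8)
The plan is to reduce the statement entirely to \autoref{thm:Main} by exploiting the identity $D(H)=D(\partial(H))$ recorded just above the theorem. Since the distance matrix of a connected uniform hypergraph $H$ coincides with that of its shadow graph $\partial(H)$, every quantity appearing in the statement is invariant under passing from $H$ to $\partial(H)$: the distance spectral radius satisfies $\lambda_1(H)=\lambda_1(\partial(H))$, each vertex transmission $D_w$ is the same computed in $H$ or in $\partial(H)$, and consequently $D_{\max}(H)=D_{\max}(\partial(H))$ together with $D_{\min}(H)=D_{\min}(\partial(H))$. In particular $\sigma(H)=\sigma(\partial(H))$.

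First I would verify that $\partial(H)$ is a legitimate input for \autoref{thm:Main}. It is a simple graph on the same $n$ vertices as $H$; it is connected because $D(H)=D(\partial(H))$ has only finite entries; and it is non-transmission-regular precisely because $D_{\max}(\partial(H))=D_{\max}(H)>D_{\min}(H)=D_{\min}(\partial(H))$, using that $H$ is assumed non-transmission-regular.

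With this in hand, applying \autoref{thm:Main} to $\partial(H)$ yields the two inequalities verbatim, because $D_{\max}(H)-\lambda_1(H)=D_{\max}(\partial(H))-\lambda_1(\partial(H))$. For the equality cases the characterization transfers directly: equality in the hypergraph bound is equivalent to $\sigma(\partial(H))$ attaining the corresponding value in \autoref{thm:Main}, which by that theorem happens if and only if $\partial(H)\cong K_{1,2,\ldots,2}$ when $n$ is odd, and if and only if $\partial(H)$ is isomorphic to some $(n-4)$-DVDR graph when $n$ is even.

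The only point requiring genuine care, rather than a genuine obstacle, is the distance identity $D(H)=D(\partial(H))$ that underlies the whole argument. This rests on the fact that distance in a uniform hypergraph is measured along hyperpaths whose consecutive vertices share a common hyperedge, and two vertices share a hyperedge in $H$ exactly when they are adjacent in $\partial(H)$; hence shortest-path lengths agree vertex-by-vertex and the two matrices are literally equal. Once this is granted, the theorem follows as an immediate corollary of \autoref{thm:Main} with no further estimation.
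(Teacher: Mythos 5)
Your proposal is correct and is exactly the paper's argument: the authors likewise observe that $H$ and $\partial(H)$ have the same distance matrix and deduce the theorem immediately from \autoref{thm:Main} applied to the shadow graph. Your write-up merely makes explicit the routine verifications (connectivity, non-transmission-regularity of $\partial(H)$, and the transfer of the equality cases) that the paper leaves implicit.
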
   

It is worth noting that the extremal hypergraphs in the above theorem are not unique for uniformity at least three. Let $H_1$ 
and $H_2$ be $3$-uniform hypergraphs on the same vertex set $\{1,2,\ldots,7\}$, whose edge sets are $E_1$ and $E_2$, respectively, 
where
\[
\begin{aligned}
E_1 & =\{\{1,2,6\}, \{1,2,7\}, \{1,3,5\}, \{1,3,7\}, \{1,4,5\}, \{1,4,6\}, \{2,3,4\}, \{5,6,7\}\}, \\
E_2 & =\{\{1, 3, 5\}, \{1, 4, 6\}, \{1, 6, 7\}, \{2, 3, 6\}, \{2, 3, 7\}, \{2, 4, 5\}, \{4, 5, 7\}\}.
\end{aligned}
\]
It is clear that $H_1$ and $H_2$ have the same shadow graph $K_{1,2,2,2}$, while $H_1$ is not isomorphic to $H_2$.

\end{document}